\newcommand{\C}{\mathbb{C}}
\newcommand{\Q}{\mathbb{Q}}
\newcommand{\N}{\mathbb{N}}
\newcommand{\Z}{\mathbb{Z}}
\newcommand{\A}{\mathbb{A}}
\newcommand{\F}{\mathbb{F}}
\DeclareMathOperator{\proj}{proj}
\DeclareMathOperator{\rec}{rec}
\DeclareMathOperator{\Ind}{Ind}
\DeclareMathOperator{\Gal}{Gal}
\DeclareMathOperator{\GL}{GL}
\DeclareMathOperator{\GSp}{GSp}
\DeclareMathOperator{\PGL}{PGL}
\DeclareMathOperator{\SL}{SL}
\DeclareMathOperator{\SU}{SU}
\DeclareMathOperator{\PGSp}{PGSp}
\DeclareMathOperator{\PSp}{PSp}
\DeclareMathOperator{\Sp}{Sp}
\DeclareMathOperator{\PO}{PO}
\DeclareMathOperator{\SO}{SO}
\DeclareMathOperator{\PSO}{PSO}
\DeclareMathOperator{\PGO}{PGO}
\DeclareMathOperator{\GO}{GO}
\DeclareMathOperator{\POM}{P\Omega}
\titleformat{\section}[hang]
{\normalfont\filright\large}{\thesection. }{0pt}
{\upshape\bfseries}
\titleformat{\subsection}[hang]
{\itshape}{\thesubsection \ - }{0pt}
{}
\theoremstyle{plain}
\newtheorem{theo}{Theorem}[section]
\newtheorem{coro}[theo]{Corollary}
\theoremstyle{definition}
\newtheorem{ejem}[theo]{Example}
\title{Automorphic Galois representations and the inverse Galois problem for certain groups of type $D_{m}$}
\author{\small ADRI\'AN ZENTENO \footnote{Instituto de Matem\'aticas, Pontificia Universidad Cat\'olica de Valpara\'iso, Blanco Viel 596, Cerro Bar\'on, Valpara\'iso, Chile. \texttt{adrian.zenteno@pucv.cl}}}
\date{\today}
\begin{document}
\maketitle

\begin{abstract}
Let $m$ be an integer greater than three and $\ell$ be an odd prime. In this paper, we prove that at least one of the following groups: $\POM^\pm_{2m}(\F_{\ell^s})$, $\PSO^\pm_{2m}(\F_{\ell^s})$, $\PO_{2m}^\pm(\F_{\ell^s})$ or $\PGO^\pm_{2m}(\F_{\ell^s})$ is a Galois group of $\Q$ for infinitely many integers $s > 0$.
This is achieved by making use of a slight modification of a group theory result of Khare, Larsen and Savin, and previous results of the author on the images of the Galois representations attached to cuspidal automorphic representations of $\GL_{2m}(\A_\Q)$.

\textit{Mathematics Subject Classification}. 11F80, 12F12, 20G40.
\end{abstract}

\section{Introduction}

In recent years, the study of the images of the Galois representations associated to RAESDC (regular algebraic, essentially self-dual, cuspidal) automorphic representations of $\GL_n(\A_{\Q})$ with prescribed local conditions has been an effective strategy to address the inverse Galois problem for finite groups of Lie type. In particular, the existence of RAESDC automorphic representations of $\GL_n(\A_{\Q})$, with a local component $\pi_p$ that is a self-dual supercuspidal representation of $\GL_n(\Q_p)$ of depth zero, has been crucial to construct Galois representations with controlled image. 

For example, in \cite{KLS08}, self-dual supercuspidal representations of $\GL_n(\Q_p)$, associated to certain tamely ramified symplectic representations of $\Gal(\overline{\Q}_p /\Q_p)$, were used to show that for any prime $\ell$ there exist infinitely many positive integers $s$ such that either $\PSp_n(\F_{\ell^s})$ or $\PGSp_n(\F_{\ell^s})$ can be realized as a Galois group over $\Q$. Similar results have been obtained in \cite{KLS10} for groups of type $B_{m}$ and $G_2$.

In this paper, we prove a similar result for groups of type $D_m$ by studying the images of certain tamely ramified orthogonal representations of $\Gal(\overline{\Q}_p /\Q_p)$ associated to self-dual supercuspidal representations of $\GL_{2m}(\Q_p)$. More precisely we prove the following result.

\begin{theo}\label{impo}
Let $m \geq 4$ be an integer and $\ell$ be an odd prime. Then, there exist infinitely many
positive integers $s$ such that at least one of the following groups:
\[
\POM^\pm_{2m}(\F_{\ell^s}), \; \PSO^\pm_{2m}(\F_{\ell^s}), \; \PO_{2m}^\pm(\F_{\ell^s}) , \; \PGO^\pm_{2m}(\F_{\ell^s})
\]
can be realized as a Galois group over $\Q$.
\end{theo}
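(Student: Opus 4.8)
The strategy follows the template of Khare–Larsen–Savin and the author's earlier work. The plan is to produce, for a carefully chosen auxiliary prime $p$, a RAESDC automorphic representation $\pi$ of $\GL_{2m}(\A_\Q)$ whose local component $\pi_p$ is a self-dual depth-zero supercuspidal representation of $\GL_{2m}(\Q_p)$ associated (via the local Langlands correspondence) to a tamely ramified orthogonal representation $\tau\colon \Gal(\overline\Q_p/\Q_p)\to \GO_{2m}(\overline\Q_\ell)$ with large, explicitly controlled image. Then I would attach to $\pi$ the compatible system of $\ell$-adic Galois representations $\rho_{\pi,\lambda}\colon \Gal(\overline\Q/\Q)\to \GL_{2m}(\overline\Q_\ell)$; essential self-duality together with the orthogonal nature of $\pi_p$ forces (for a density-one set of $\lambda$) the residual representation $\overline\rho_{\pi,\lambda}$ to land in $\GO_{2m}(\F_{\ell^s})$ for some $s$. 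The image will be determined by combining local information at $p$ (which gives an element of large order, forcing irreducibility and a lower bound on the image) with a group-theoretic classification.

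\textbf{The group-theoretic input.}

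The key mechanism is a slight modification of the Khare–Larsen–Savin group theory result: if $G\subseteq \GL_{2m}(\overline\F_\ell)$ is a finite subgroup that contains a suitable regular semisimple element of large order coming from the local representation $\tau$ at $p$ (of the shape prescribed by an orthogonal tame inertia type), and $G$ preserves a nondegenerate quadratic form, then $G$ must contain $\POM^\pm_{2m}(\F_{\ell^s})$ as a subquotient — more precisely, the image of $G$ in $\PGL_{2m}$ is sandwiched between $\POM^\pm_{2m}(\F_{\ell^s})$ and $\PGO^\pm_{2m}(\F_{\ell^s})$, hence equals one of the four groups in the statement. I would invoke the earlier results of the author (cited in the excerpt as governing the images of Galois representations attached to cuspidal automorphic representations of $\GL_{2m}(\A_\Q)$) to verify the hypotheses of this modified classification: that the projective image is of the stated type and cannot be a smaller exceptional or imprimitive subgroup, using the large-order element at $p$ to rule out the bounded-order and reducible cases.

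\textbf{Carrying it out.}

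First I would fix the orthogonal tame type: choose $\tau = \Ind_{L/\Q_p}(\chi) \oplus \Ind_{L/\Q_p}(\chi)^\vee$-type constructions, or more directly an induction of characters from an unramified-by-tame extension chosen so that the associated Weil–Deligne representation is orthogonal of dimension $2m$, self-dual, and irreducible after semisimplification with a Frobenius eigenvalue structure giving a regular element whose order grows with the residue degree. Second, I would apply the automorphy-lifting/existence results for RAESDC representations with prescribed supercuspidal local type at $p$ (and a regular algebraic weight at infinity, controlled at finitely many other auxiliary primes to guarantee self-duality with the correct sign and to ensure residual irreducibility) to obtain $\pi$. Third, reducing mod $\lambda$ and letting $\ell^s$ denote the field generated by the image, I would check that the image is orthogonal and apply the modified KLS classification to conclude the projective image is one of the four groups; varying $\lambda$ over the infinitely many primes in a density-one set where the residue degree $s$ takes infinitely many values yields the theorem. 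The main obstacle will be the third step: pinning down the orthogonal image precisely enough — in particular controlling the similitude/spinor-norm data so that one lands exactly in the list $\{\POM^\pm,\PSO^\pm,\PO^\pm,\PGO^\pm\}$ rather than merely "between $\POM$ and $\PGO$" — and ensuring the sign $\pm$ and the value of $s$ behave as claimed for infinitely many $\lambda$; this is exactly where the modification of the KLS group-theory lemma and the author's prior image computations must be combined carefully.
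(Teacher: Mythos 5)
Your overall template is the same as the paper's (a maximally induced orthogonal local type at an auxiliary prime $p$, a RAESDC automorphic representation of $\GL_{2m}(\A_\Q)$ with that prescribed supercuspidal component, the attached $\ell$-adic representations, and a modified Khare--Larsen--Savin classification of the residual image). However, your final step has a genuine gap: the theorem fixes the odd prime $\ell$ and asks for infinitely many exponents $s$, whereas you propose to obtain the infinitude of $s$ by ``varying $\lambda$ over a density-one set of primes'' of the coefficient field. For a fixed rational prime $\ell$ and a single automorphic representation $\pi$, there are only finitely many primes $\lambda \mid \ell$ of the coefficient field, so this produces only finitely many residual representations and finitely many values of $s$; a density-one set of $\lambda$ necessarily ranges over infinitely many residue characteristics, which proves a different statement. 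The paper instead keeps $\ell$ fixed and varies the pair of auxiliary primes $(p,t)$: Chebotarev supplies infinitely many admissible pairs with $t$ arbitrarily large, the inertia image at $p$ then contains elements of order $t$, so the images of the resulting projective representations are unbounded in size and the exponent $s$ must take infinitely many values.

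A second, related gap is that you do not supply the mechanism by which the large-order local element actually forces the global residual image to contain $\Omega^{\pm}_{2m}(\F_{\ell^k})$. Saying that the regular semisimple element at $p$ ``rules out the bounded-order and reducible cases'' is the desired conclusion, not an argument: the KLS-type classification (Theorem~\ref{kls22} and Corollary~\ref{kls26}) applies to a finite group $\Gamma$ only when $\Gamma^d$ (the intersection of all normal subgroups of index at most $d$) contains the metacyclic group $\Gamma_t = \overline{\rho}(D_p)$. Arranging this is the crux: one must choose $p$ to split completely in the compositum $K$ of all number fields of degree at most $d(n)+1$ unramified outside $\{\ell,\infty\}$, and take $t > d(n)+1$, so that every index-$\le d(n)+1$ normal subgroup of the image cuts out an extension of $\Q$ that is unramified at $p$ (because $t$ is too large), hence contained in $K$, hence split at $p$, hence containing $\overline{\rho}(D_p)$. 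Without this Chebotarev condition on $p$ the classification theorem cannot be invoked. Finally, a smaller point: your first candidate for the local parameter, $\Ind(\chi)\oplus\Ind(\chi)^{\vee}$, is reducible and would not correspond to a supercuspidal representation of $\GL_{2m}(\Q_p)$; you need the single irreducible induction $\Ind_{W_E}^{W_{\Q_p}}\chi$ from the unramified degree-$2m$ extension, with the normalization $\chi(p)=1$ (the $O$-type condition) that makes it orthogonal rather than symplectic.
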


To the best of our knowledge, these orthogonal groups are not previously known to be Galois over $\Q$, except for some cases where $s$ or $m$ is  small, which were studied in \cite{MM99}, \cite{Re99}, \cite{Zi}, \cite{Ze19} and \cite{Ze20}.

\paragraph{Notation:} Through this paper, if $F$ is a perfect field, we denote by $\overline{F}$ an algebraic closure of $F$ and by $G_F$ the absolute Galois group $\Gal(\overline{F}/F)$. Whenever $G$ is a subgroup of a certain linear group $\GL_n(F)$, we write P$G$ for the image of $G$ in the projective linear group $\PGL_n(F)$. In fact, for classical groups in general, we will use the same notation and conventions as in Section 2 of \cite{Ze19}.

\section{Admissible pairs and local Langlands correspondence}\label{se1}

Let $F$ be a $p$-adic field (i.e., a finite extension of the $p$-adic field $\Q_p$) with ring of integers $\mathcal{O}_F$ and Weil group $W_F$.
We denote by $\mathfrak{p}_F$ the maximal ideal of $\mathcal{O}_F$ and by $q$ the order of the residue field $\kappa_F = \mathcal{O}_F/\mathfrak{p}_F$. Moreover, $U_F^1 = 1+ \mathfrak{p}_F$ denotes the group of 1-units and $\varpi$ denotes a uniformizing element. For each integer $n \geq 1$, let $\mathcal{G}_F^0(n)$ be the set of equivalence classes of irreducible smooth representations of $W_F$ of dimension $n$ and $\mathcal{A}_F^0(n)$ be the set of equivalence classes of irreducible admissible supercuspidal representations of $\GL_n(F)$. The local Langlands correspondence gives a bijective map
\[
\rec_{F,n} : \mathcal{G}_F^0(n) \longrightarrow \mathcal{A}_F^0(n) 
\]
for each $n$ \cite{HT01} \cite{He00} \cite{Sch13} . Unfortunately, the existence of the family $\{ \rec_{F,n} \}_n$  has been established indirectly and explicit information about it is very hard to obtain. However, when $p$ and $n$ are relatively primes, Howe \cite{Ho77} defined a set of characters of certain extensions of $F$ of degree $n$ which parameterizes both $\mathcal{G}_F^0(n)$ and $\mathcal{A}_F^0(n)$, and allows us to describe (in this case) the local Langlands correspondence in a very explicit way (see also \cite{Moy86} and \cite{BH05}). So, from now on, we will assume that $(n,p) = 1$.

Let $E/F$ be a tamely ramified extension of degree $n$ and $\chi$ be a character of $E^\times$. The pair $(E/F, \chi)$ is called admissible if it satisfies the following two conditions. Let $L$ range over intermediate fields, $F \subseteq L \subseteq E$.
\begin{enumerate}
\item If $\chi$ factors through the relative norm $N_{E/L}$, then $L = E$.
\item If $\chi \vert_{U^1_E}$, factors through $N_{E/L}$, then $E/L$ is unramified.
\end{enumerate}

Two admissible pairs $(E/F,\chi)$ and $(E'/F' , \chi ')$ are equivalents if there is an $F$-isomorphism $\psi : E \rightarrow E'$ such that $\chi = \chi' \circ \psi$. The map 
\[
(E/F, \chi) \longmapsto \rho_{\chi} = \Ind_{W_E}^{W_F} \chi
\]
provides a canonical bijection between the set $P_n(F)$ of equivalence classes of admissible pairs $(E/F, \chi)$, with $E/F$ of degree $n$, and $\mathcal{G}_F^0(n)$. Here, we regard $\chi$ as a character of $W_E$ via class field theory. Similarly, it is possible to construct a canonical bijection $(E/F, \chi) \mapsto \pi_\chi$ between $P_n(F)$ and $\mathcal{A}_F^0(n)$. For our purposes, it will be enough to know the explicit construction of the irreducible representations of $W_F$. We refer the reader to \cite{Ho77} and \cite{Moy86} for details about the construction of the supercuspidal representations of $\GL_n(F)$ associated to the admissible pairs $(E/F, \chi)$. These two bijections yield a canonical bijection
\[
\rec^{\mathcal{N}}_{F,n} : \mathcal{G}_F^0(n) \longrightarrow \mathcal{A}_F^0(n) 
\]
for each $n$. In \cite[Theorem A]{BH05}, $\rec_{F,n}$ and $\rec^{\mathcal{N}}_{F,n}$ were compared, and it was proven that they differ by a character. More precisely, if $(E/F, \chi) \in P_n(F)$, there is a tamely ramified character $\mu$ of $E^\times$ such that $(E/F, \mu \chi)$ is admissible and
\[
\rec_{F,n}(\rho_\chi)= \pi_{\mu \chi}.  
\]

Of interest for us will be the self-dual representations. In \cite{Ad97}, Adler proved that $\rho_{\chi}$ is self-dual if and only if one of the following conditions holds:
\begin{enumerate}
\item there is an intermediate field $F \subseteq L \subseteq E$, such that $[E:L]=2$ and $\chi \vert _{N_{E/L}(E^\times)}$ is trivial,
\item $p=2$ and $\chi$ has order two.
\end{enumerate}
In particular, this implies, by local Langlands correspondence, that $\GL_n$ admits self-dual supercuspidal representations only if $n$ or $p$ is even.

Finally, and for the sake of the explicitness, we will explain how to construct concrete examples which will be useful through this article.

\begin{ejem}\label{eje1}
First take any even integer $n$ and $E$ the unique unramified extension of $F$ of degree $n$. Recall that $E^{\times} \simeq \langle \varpi \rangle \times \kappa_E^{\times} \times U^1_E$. Then, take any integer $t$ that divides $q^{n/2}+1$ but that does not divide any $q^{n/p_i}-1$, where $p_i$ range over the primes dividing $n$. Finally, take any character of $\kappa_E^{\times}$ of order $t$, inflate trivially to a character of $U_E^1$, and extend to $E^\times$ by sending $\omega$ to either 1 or $-1$. So, we obtain an admissible pair $(E/F, \chi)$ such that $\rho_{\chi}$ is self-dual.
\end{ejem}

We remark that all previous constructions are much more general than we need. However, we decided to write this section in this way since the exposition hardly simplifies by restricting it to the particular case that we need. 

\section{On the images of certain tame self-dual representations of $W_{\Q_p}$}\label{se2}

In this section we will study the image of some of the Galois representations constructed in Example \ref{eje1}.

Let $n$ be a positive even integer and $p>n$ be a prime. Let $t$ be a prime such that $t \equiv 1 \mod n$ and the order of $p$ modulo $t$ is $n$.  
Let $E$ be the unique unramified extension of $\Q_p$ of degree $n$ and $E^{\times} \simeq \langle p \rangle \times \F^{\times}_{p^n} \times U^1_E$.
We will say that a character $\chi : E^{\times} \rightarrow \C^{\times}$ is of $O$-\emph{type} (resp. $S$-\emph{type}) \emph{at $p$ of order $t$}, if $\chi (p) = 1$ (resp. $\chi (p) = -1$) and $\chi \vert _{\F^{\times}_{p^n} \times U^1_E}$ has order $t$.
In particular, $\chi$ is a character as in Example \ref{eje1} and the pair $(E/\Q_p, \chi)$ is admissible. 

Let $\ell$ be a prime distinct from $p$ and $t$. From now on, we will fix an isomorphism $\iota : \overline{\Q}_\ell \cong \C$, which allows us to compare $\overline{\Q}_\ell$-valued characters with $\C$-valued ones.  As we pointed out previously, by local class field theory, we can regard $\chi$ as a character of $W_E$, or in fact as a character of $G_E$. Then, by the discussion in the previous section, we have a $\GL_n(\overline{\Q}_\ell)$-valued, $n$-dimensional, tamely ramified, self-dual, irreducible Galois representation
\[
\rho_{\chi} = \Ind^{G_{\Q_p}}_{G_E}\chi
\] 
associated to $(E/\Q_p, \chi)$. When $\chi$ is of $O$-type (resp. $S$-type) at $p$ of order $t$, it can be proven that $\rho_{\chi}$ is orthogonal (resp. symplectic) in the sense that it can be conjugated to take values in $\SO_n(\overline{\Q}_\ell)$ (resp. $\Sp_n(\overline{\Q}_\ell)$). Moreover, if $\alpha: G_{\Q_p} \rightarrow \overline{\Q}^\times_\ell$ is an unramified character, then the residual representation $\overline{\rho}_{\chi} \otimes \overline{\alpha}$ is irreducible and tensor-indecomposable. See Section 5 of \cite{Ze19} for details. 

Henceforth, we will assume that $\chi$ is of $O$-type at $p$ of order $t$ because the characters of $S$-type have been studied extensively in \cite{KLS08} and \cite{ADW16}. Let $\Gamma_t$ be a non-abelian homomorphic image of an extension of $\Z / n\Z$ by $\Z/t\Z$ such that $\Z/n\Z$ acts faithfully on $\Z/t\Z$. For example, $\Gamma_t$ can be the image of the residual representation $\overline{\rho}_{\chi}$ of $\rho_{\chi}$.
On the other hand, let $\Gamma$ be a group and $d$ be a positive integer. We define $\Gamma^d$ as the intersection of all normal subgroups of $\Gamma$ of index at most $d$.  Then we have the following result which gives us information about $\Gamma$ when $\Gamma^d$ contains $\Gamma_t$.

\begin{theo}\label{kls22}
There exist constants $d(n)$ and $t(n)$ depending  only on $n$ such that, if $d > d(n)$ is an integer, $t>t(n)$ and $\ell$ are distinct primes, and $\Gamma \subseteq \GL_n(\overline{\F}_\ell)$ is a finite group such that $\Gamma^d$ contains $\Gamma_t$, then there exist $g \in \GL_n(\overline{\F}_{\ell})$ and a positive integer $k$ such that $g^{-1}\Gamma g$ is a group containing one of the following groups: $\SL_n(\F_{\ell^k})$, $\SU_n(\F_{\ell^k})$, $\Sp_n(\F_{\ell^k})$ or $\Omega_n^{\pm}(\F_{\ell^k})$.
\end{theo}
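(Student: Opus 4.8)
The plan is to deduce Theorem~\ref{kls22} from the original group-theoretic theorem of Khare--Larsen--Savin, which applies verbatim except that their version presumably only produces $\SL_n(\F_{\ell^k})$, $\SU_n(\F_{\ell^k})$ or $\Sp_n(\F_{\ell^k})$ as the output group, whereas here we also want to allow the orthogonal groups $\Omega_n^{\pm}(\F_{\ell^k})$. So the content of the ``slight modification'' is: the classification step that identifies a large finite subgroup of $\GL_n(\overline{\F}_\ell)$ whose projective image contains a group of Lie type in the defining characteristic must be allowed to land in the orthogonal family as well. First I would recall the skeleton of the KLS argument: given $\Gamma$ with $\Gamma^d \supseteq \Gamma_t$, one shows (using that $\Gamma_t$ is non-abelian with an element of order $t$ acting irreducibly, and that $t$ is large compared to $n$) that $\Gamma$ acts irreducibly and indeed primitively on $\overline{\F}_\ell^n$, that $\Gamma$ is not of ``extraspecial/Weil'' type and not imprimitive, and then invoke Larsen's classification / the classification of finite subgroups of $\GL_n$ to conclude that the image of $\Gamma$ in $\PGL_n$ contains $\mathrm{P}G(\F_{\ell^k})$ for some simple group of Lie type $G$ of defining characteristic $\ell$. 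The hypothesis $\Gamma \subseteq \GL_n(\overline{\F}_\ell)$ rather than $\PGL_n$ is what lets us lift to the genuine matrix groups $\SL$, $\SU$, $\Sp$, $\Omega^\pm$.

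The key steps, in order, are as follows. (i) Fix $d(n)$ and $t(n)$ large enough that all of the following genericity statements hold for $t > t(n)$, $d > d(n)$. (ii) Show $\Gamma$ acts absolutely irreducibly: since $\Gamma_t \subseteq \Gamma^d \subseteq \Gamma$ and $\Gamma_t$ already acts irreducibly (it is a non-abelian quotient of an extension of $\Z/n\Z$ by $\Z/t\Z$ with faithful action, so the order-$t$ normal subgroup forces an $n$-dimensional irreducible by the standard Clifford-theory computation, using $t \equiv 1 \bmod n$ and $\mathrm{ord}_t(p) = n$), the same holds for $\Gamma$. (iii) Rule out imprimitivity and the normalizer-of-a-torus / monomial cases, and rule out the case where $\Gamma$ projects into a group with a normal $\ell$-subgroup or a finite ``exceptional'' subgroup, by a counting argument: such groups have order bounded in terms of $n$ only, contradicting that $\Gamma$ contains an element of order $t > t(n)$. (iv) Apply the classification of finite subgroups of $\GL_n(\overline{\F}_\ell)$ with large projective image (Larsen--Pink, or the CFSG-based arguments used in \cite{KLS08}) to conclude that $\mathrm{P}\Gamma$ contains $\mathrm{P}G(\F_{\ell^k})$ for a finite simple group of Lie type $G$ in characteristic $\ell$, with $k$ bounded so that $G(\F_{\ell^k})$ embeds in $\GL_n$. (v) Finally, since $\Gamma$ itself sits in $\GL_n(\overline{\F}_\ell)$, lift: the preimage in $\Gamma$ of the perfect group $\mathrm{P}G(\F_{\ell^k})$ is, after conjugation by a suitable $g$, a subgroup of $\GL_n$ containing the corresponding universal central extension realized inside $\GL_n$, which is exactly one of $\SL_n(\F_{\ell^k})$, $\SU_n(\F_{\ell^k})$, $\Sp_n(\F_{\ell^k})$ or $\Omega_n^{\pm}(\F_{\ell^k})$ (these being the quasi-simple classical matrix groups whose natural module is $n$-dimensional); the exceptional-type and smaller classical groups are excluded because their minimal faithful representations in characteristic $\ell$ have dimension either not equal to $n$ or incompatible with the self-duality/irreducibility structure carried over from $\Gamma_t$.

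The main obstacle I expect is step (v): extracting the orthogonal group $\Omega_n^\pm$ with the correct type $\pm$ and ensuring it is the full $\Omega$ (not merely $\SO$ or a proper subgroup), and simultaneously making sure the list is exhaustive — i.e.\ that no exotic module of dimension $n$ for some Lie-type group of unequal characteristic or some sporadic group can occur. Here one leans on the fact that the order of $\Gamma$ is divisible by the large prime $t$ with $\mathrm{ord}_t(p)=n$, which by Zsygmondy-type / linear-algebra constraints forces $n$ to be (close to) the minimal dimension of a faithful $\overline{\F}_\ell$-representation of whatever simple group appears, pinning it to a classical group in its natural representation; then the only subtlety is that in even dimension the natural module of a classical group can be orthogonal of either discriminant, giving the two families $\Omega_n^+$ and $\Omega_n^-$, which is precisely why the statement is a disjunction. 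I would handle the ``which of $\SL/\SU/\Sp/\Omega^\pm$'' bookkeeping exactly as in \cite{KLS08} and \cite{KLS10}, simply adding the orthogonal line to their case list, since the argument that produces it is formally identical once one knows the natural module is nondegenerate symmetric bilinear.
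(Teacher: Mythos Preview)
Your proposal rests on a misreading of what the ``slight modification'' is. Theorem~2.2 of \cite{KLS08} already has all four families $\SL_n$, $\SU_n$, $\Sp_n$, $\Omega_n^{\pm}$ in its conclusion; there is nothing to add on the output side. The paper's proof here is literally a one-line citation: the argument of \cite{KLS08} is invoked verbatim. The only thing that changes is the \emph{input} group $\Gamma_t$. In \cite{KLS08} the group $\Gamma_t$ is the specific image of the local residual representation attached to an $S$-type character, whereas here the author allows $\Gamma_t$ to be any non-abelian homomorphic image of an extension of $\Z/n\Z$ by $\Z/t\Z$ with faithful action (so that the $O$-type image is covered as well). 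The observation that justifies this is exactly what the paper records: the KLS proof uses only the metacyclic structure of $\Gamma_t$ and the faithfulness of the $\Z/n\Z$-action on $\Z/t\Z$, nothing more specific. So the orthogonal bookkeeping you outline in step~(v) is not the issue, and the paper does not redo any of the classification work you sketch in (i)--(iv).

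A smaller technical slip: in your step~(ii) you appeal to $t\equiv 1 \bmod n$ and $\mathrm{ord}_t(p)=n$ to get irreducibility of $\Gamma_t$. But there is no prime $p$ in the hypotheses of Theorem~\ref{kls22}; that arithmetic condition belongs to the ambient construction in Section~\ref{se2}, not to this purely group-theoretic statement. Whatever irreducibility or large-element-order input the KLS argument needs must be extracted from the abstract hypotheses on $\Gamma_t$ (non-abelian metacyclic quotient, faithful action, $t$ large), not from the arithmetic of $p$. Also, in (v) you worry about getting ``the full $\Omega$ (not merely $\SO$ or a proper subgroup)''; note that $\Omega_n^{\pm}$ is itself a subgroup of $\SO_n^{\pm}$, so the inclusion goes the other way.
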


\begin{proof}
The proof is the same as that of Theorem 2.2 of \cite{KLS08}, since it only depends on the metacyclic group structure of $\Gamma_t$ and the faithful action of $\Z/n\Z$ on $\Z /t\Z$. 
\end{proof}

\begin{coro}\label{kls26}
Let $n\geq 8$ and $\ell$ be an odd prime. Under the hypothesis of the previous theorem, if $\Gamma \subseteq \GO_n(\overline{\F}_{\ell})$, then there exists $g \in \GL_n(\overline{\F}_{\ell})$ such that $g^{-1} \Gamma g$ contains $\Omega_n^\pm (\F_{\ell^k})$ for some positive integer $k$.
\end{coro}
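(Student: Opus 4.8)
The idea is to apply Theorem \ref{kls22} to the group $\Gamma$ and then eliminate all the possibilities in its conclusion except $\Omega_n^\pm(\F_{\ell^k})$, using the hypothesis $\Gamma \subseteq \GO_n(\overline{\F}_\ell)$. First I would invoke Theorem \ref{kls22}: there is $g \in \GL_n(\overline{\F}_\ell)$ and a positive integer $k$ such that $g^{-1}\Gamma g$ contains one of $\SL_n(\F_{\ell^k})$, $\SU_n(\F_{\ell^k})$, $\Sp_n(\F_{\ell^k})$ or $\Omega_n^\pm(\F_{\ell^k})$. The point is that the first three options are incompatible with $\Gamma$ preserving (up to scalars) a nondegenerate symmetric bilinear form on an $n$-dimensional space, at least once $n \geq 8$.

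The key step is the following observation: if $H$ is one of $\SL_n(\F_{\ell^k})$, $\SU_n(\F_{\ell^k})$ or $\Sp_n(\F_{\ell^k})$, then the natural $n$-dimensional representation of $H$ over $\overline{\F}_\ell$ is absolutely irreducible, and a group containing $g^{-1}\Gamma g \supseteq H$ has the property that any bilinear form it preserves up to scalars is already preserved (up to scalars) by $H$. By Schur's lemma applied to the irreducible module, the space of $H$-invariant bilinear forms is at most one-dimensional, so if it is nonzero it is spanned by a single form which is then forced to be symmetric, alternating, or zero according to the intrinsic structure of $H$. For $\SL_n$ with $n \geq 3$ the invariant-form space is zero; for $\Sp_n$ the unique invariant form is alternating, not symmetric; for $\SU_n(\F_{\ell^k})$ the form preserved by the $\F_{\ell^k}$-points inside $\GL_n(\overline{\F}_\ell)$ is Hermitian with respect to the order-$\ell^k$ Frobenius and is not $\overline{\F}_\ell$-bilinear, so it cannot coincide with the symmetric bilinear form defining $\GO_n$. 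Hence in all three cases $g^{-1}\Gamma g$ cannot be contained in a conjugate of $\GO_n(\overline{\F}_\ell)$ — equivalently $\Gamma$ itself cannot lie in $\GO_n(\overline{\F}_\ell)$, contradicting the hypothesis. Therefore the only surviving possibility is that $g^{-1}\Gamma g$ contains $\Omega_n^\pm(\F_{\ell^k})$, which is exactly the claim.

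I would then need to make the comparison of forms precise: preserving the symmetric form up to scalars means $\Gamma \subseteq \GO_n$, and since $g^{-1}\Gamma g$ contains $H$, the group $H$ must preserve the form $g^T B g$ (where $B$ is the Gram matrix defining $\GO_n$) up to scalars; this is a genuine nonzero symmetric bilinear form, and the irreducibility-plus-Schur argument above shows $H$ preserves no such form for $H \in \{\SL_n, \SU_n, \Sp_n\}$ once $n \geq 8$. (The bound $n \geq 8$ comfortably covers the edge cases — e.g. $\SL_2 \cong \Sp_2$ preserves an alternating form, and small-dimensional exceptional isomorphisms — which is why the hypothesis is stated that way, matching the range $n = 2m$, $m \geq 4$ needed in Theorem \ref{impo}.)

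The main obstacle is the $\SU_n$ case: one must argue carefully that the Hermitian form preserved by $\SU_n(\F_{\ell^k})$, when this group is viewed as a subgroup of $\GL_n(\overline{\F}_\ell)$, does not become an $\overline{\F}_\ell$-bilinear symmetric form under any change of basis. The cleanest way is again via Schur: the natural module of $\SU_n(\F_{\ell^k})$ over $\overline{\F}_\ell$ is absolutely irreducible, so the space of invariant $\overline{\F}_\ell$-bilinear forms is at most one-dimensional; but $\SU_n(\F_{\ell^k})$ and its conjugate by the Frobenius twist give two "different" irreducible $\overline{\F}_\ell$-forms of the module whose tensor product must contain the invariant form, forcing that space to be zero. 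Once this is established, the rest of the proof is immediate. Alternatively one can simply cite the fact — standard in this circle of ideas, e.g. as used in \cite{KLS08} and \cite{Ze19} — that $\Omega_n^\pm$ is the unique entry on the Theorem \ref{kls22} list that embeds in $\GO_n$, and this is precisely what the corollary records.
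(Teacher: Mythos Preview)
Your plan is correct and follows the same overall architecture as the paper's proof: apply Theorem~\ref{kls22} and then rule out the three non-orthogonal alternatives by showing that none of $\SL_n(\F_{\ell^k})$, $\SU_n(\F_{\ell^k})$, $\Sp_n(\F_{\ell^k})$ can sit inside a conjugate of $\GO_n(\overline{\F}_\ell)$. The difference lies in the tool used for the elimination step. You work entirely at the level of the finite groups, using Schur's lemma on the natural module to control the space of invariant bilinear forms (zero for $\SL_n$ and $\SU_n$ with $n\geq 3$, alternating for $\Sp_n$); this forces you to treat $\SU_n$ separately and argue that $V\not\cong V^{*}$ via the Frobenius twist, which you correctly flag as the delicate point. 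The paper instead invokes Steinberg's theorem to lift the $n$-dimensional representation of the finite group to an $n$-dimensional self-dual representation of the ambient algebraic group ($\SL_n$ or $\Sp_n$), and then kills both $\SL_n(\F_{\ell^k})$ and $\SU_n(\F_{\ell^k})$ simultaneously by the single observation that algebraic $\SL_n$ has no non-trivial self-dual $n$-dimensional representation for $n>2$. Your route is more elementary in that it avoids Steinberg, at the cost of a case-by-case analysis; the paper's route is shorter and handles the unitary case for free. One small point you should make explicit when writing it up: to pass from ``$H$ preserves the symmetric form up to scalars'' to ``$H$ preserves it exactly'' (so that Schur applies to genuinely invariant forms), use that $\SL_n(\F_{\ell^k})$, $\SU_n(\F_{\ell^k})$ and $\Sp_n(\F_{\ell^k})$ are perfect for $n\geq 8$, so the similitude character is trivial on $H$.
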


\begin{proof}
The proof is adapted from Corollary 2.6 of \cite{KLS08} where the symplectic case is dealt with. Assume that $g^{-1}\Gamma g$ contains $\SL_n(\F_{\ell^k})$, $\SU_n(\F_{\ell^k})$ or $\Sp_n (\F_{\ell^k})$. As $\Gamma \subseteq \GO_n(\overline{\F}_{\ell})$, one of these groups has an $n$-dimensional symmetric representation. Then, by Steinberg's theorem, we have that the algebraic group $\SL_n$ or $\Sp_n$ has a non-trivial self-dual $n$-dimensional representation defined over $\overline{\F}_{\ell}$ which maps the fixed points of a Frobenius map into $\SO_n(\overline{\F}_\ell)$. However, $\SL_n$ has no non-trivial self-dual representation of dimension $n$ when $n >2$ and as $\ell$ is odd, an irreducible $n$-dimensional representation of $\Sp_n (\F_{\ell^k})$ cannot preserve a symmetric form since it already preserves a symplectic form.
Then by Theorem \ref{kls22} $g^{-1}\Gamma g$ contains $\Omega^{\pm}_n(\F_{\ell^k})$.
\end{proof}

\section{Maximally induced representations and the inverse Galois problem}\label{se3}

Let $n$ be a positive even integer, and $p$, $t$, $\ell$, $\chi$ and $\rho_\chi$ as in the previous section. As before, we assume that $\chi$ is of $O$-type at $p$ of order $t$. We say that a Galois representation 
\[
\rho_\ell: G_\Q \longrightarrow \GO_n(\overline{\Q}_\ell)
\]
is \emph{maximally induced} of $O$-type at $p$ of order $t$ if the restriction of $\rho_\ell$ to a decomposition group $D_p$ at $p$ is equivalent to $\rho_\chi \otimes \alpha$ for some unramified character $\alpha : G_{\Q_p} \rightarrow \overline{\Q}_\ell^{\times}$.
The following result states that, for an appropriate couple of primes $(p, t)$, the residual image of a maximally induced representation is large. 

\begin{theo}\label{adz}
Let $n \geq 8$ be an even integer, and $d(n)$, $t(n)$ be constants as in Theorem \ref{kls22}. Let $\ell$ be an odd prime and $K$ be the compositum of all number fields of degree at most $d(n)+1$ which are unramified outside $\{ \ell, \infty \}$. 
Let $(p, t)$ be a couple of primes satisfying that $p > \ell$ splits completely in $K$, $t \equiv 1 \mod n$, $t > \max \{ d(n)+1, t(n),\ell \}$ and the order of $p$ modulo $t$ is $n$.
Let 
\[
\rho_\ell: G_\Q \longrightarrow \GO_n(\overline{\Q}_\ell)
\] 
be a maximally induced Galois representation of $O$-type at $p$ of order $t$ which is  unramified outside $\{p, \ell \}$. Then, the image of $\overline{\rho}_\ell$ contains $\Omega_n^\pm (\F_{\ell^k})$ for some positive integer $k$.
\end{theo}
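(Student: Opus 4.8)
The plan is to apply Corollary \ref{kls26} to the finite group $\Gamma := \overline{\rho}_\ell(G_{\Q}) \subseteq \GL_n(\overline{\F}_\ell)$. Since $\rho_\ell$ takes values in $\GO_n(\overline{\Q}_\ell)$ and has finite image, after conjugating we may assume $\rho_\ell$ is defined over the ring of integers of a finite extension of $\Q_\ell$, so that its reduction $\overline{\rho}_\ell$ preserves, up to the similitude character, the reduction of the underlying quadratic form; hence $\Gamma \subseteq \GO_n(\overline{\F}_\ell)$. Moreover $n \geq 8$ and $\ell$ is odd. Thus, once the hypotheses of Theorem \ref{kls22} are verified for $\Gamma$, Corollary \ref{kls26} produces $g \in \GL_n(\overline{\F}_\ell)$ and an integer $k \geq 1$ with $g^{-1}\Gamma g \supseteq \Omega_n^\pm(\F_{\ell^k})$, which is exactly the assertion. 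Put $d := d(n)+1$, so that $d > d(n)$; since $t > \max\{d(n)+1, t(n), \ell\}$ we have $t > t(n)$ and $t \neq \ell$, so $t$ and $\ell$ are distinct primes. It therefore remains only to show that $\Gamma^{d}$ contains a non-abelian metacyclic group $\Gamma_t$ of the type occurring in Theorem \ref{kls22}.

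For this, recall from Section \ref{se2} (and Section 5 of \cite{Ze19}) that, because $\rho_\ell$ is maximally induced of $O$-type at $p$ of order $t$, one has $\overline{\rho}_\ell|_{D_p} \cong \overline{\rho}_\chi \otimes \overline{\alpha}$ with $\alpha$ unramified; consequently the subgroup $\overline{\rho}_\ell(D_p) \subseteq \Gamma$ contains a non-abelian metacyclic group $\Gamma_t$ coming from the faithful action of $\Z/n\Z$ on $\Z/t\Z$ in the induced structure of $\overline{\rho}_\chi$. Furthermore, since $\overline{\alpha}$ is unramified, $\overline{\rho}_\ell(I_p) = \overline{\rho}_\chi(I_p)$, and this image is cyclic of order exactly $t$: in the explicit model it is generated by $\mathrm{diag}(\zeta, \zeta^{p}, \dots, \zeta^{p^{n-1}})$ for a primitive $t$-th root of unity $\zeta$, which has order $t$ because $t$ is prime and prime to $p$. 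The key reduction is then the following: it suffices to prove that $\overline{\rho}_\ell(D_p) \subseteq N$ for every normal subgroup $N \trianglelefteq \Gamma$ with $[\Gamma : N] \leq d$, since intersecting over all such $N$ gives $\Gamma_t \subseteq \overline{\rho}_\ell(D_p) \subseteq \Gamma^{d}$.

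To establish this, fix such an $N$ and let $M$ be the fixed field of the kernel of the composite $G_{\Q} \xrightarrow{\overline{\rho}_\ell} \Gamma \twoheadrightarrow \Gamma/N$; this composite is surjective, so $\Gal(M/\Q) \cong \Gamma/N$ and $[M:\Q] = [\Gamma:N] \leq d = d(n)+1$. The image of the inertia group $I_p$ in $\Gamma/N$ is a quotient of the cyclic group $\overline{\rho}_\ell(I_p)$ of order $t$, hence cyclic of order dividing $t$; being also of order at most $[M:\Q] \leq d(n)+1 < t$ with $t$ prime, this image is trivial, so $M/\Q$ is unramified at $p$. As $\rho_\ell$ is unramified outside $\{p,\ell\}$, the extension $M/\Q$ is unramified outside $\{\ell, \infty\}$ and of degree at most $d(n)+1$, whence $M \subseteq K$. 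Since $p$ splits completely in $K$, it splits completely in $M$, i.e.\ the decomposition group $D_p$ maps trivially to $\Gal(M/\Q) = \Gamma/N$; equivalently $\overline{\rho}_\ell(D_p) \subseteq N$. This proves the reduction, and with it the theorem. The main obstacle is precisely this last paragraph — the interaction between the order-$t$ tame inertia image at $p$, the primality of $t$ combined with $t > d(n)+1$, and the complete splitting of $p$ in $K$; the group-theoretic heavy lifting is already packaged into Theorem \ref{kls22} and Corollary \ref{kls26}, and the rest is bookkeeping.
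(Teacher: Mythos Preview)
Your argument is correct and follows essentially the same route as the paper: show that for every normal subgroup $N \trianglelefteq \Gamma$ of index at most $d(n)+1$ the associated field is unramified outside $\{\ell,\infty\}$ (using that $\overline{\rho}_\ell(I_p)$ has prime order $t > d(n)+1$), hence contained in $K$, hence split at $p$, so $\overline{\rho}_\ell(D_p) \subseteq \Gamma^{d(n)+1}$, and then invoke Theorem~\ref{kls22} and Corollary~\ref{kls26}. One small slip: $\rho_\ell$ need not have finite image---but you do not need this, since continuity and compactness of $G_\Q$ already yield an integral model whose reduction lands in $\GO_n(\overline{\F}_\ell)$.
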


\begin{proof}
The proof is adapted from Proposition 5.5 of \cite{AD} where a character of $S$-type is used.
Let $\Gamma$ be the image of $\overline{\rho}_\ell$ and $H$ be a normal subgroup of $\Gamma$ of index at most $d(n)+1$. Associated to $H$, we have a Galois extension $L/\Q$ of degree at most $d(n)+1$. Moreover, by the ramification of $\rho_\ell$, we have that $L/\Q$ is unramified
outside $\{p, \ell, \infty \}$.

As $\rho_\ell$ is maximally induced of $O$-type at $p$ of order $t$, we have that the restriction $\overline{\rho}_\ell \vert _{D_p}$ is equivalent to
$\overline{\rho}_\chi \otimes \overline{\alpha}$, for some unramified character $\alpha$, and the image of the inertia $\overline{\rho}_\ell(I_p)$ has order $t$. Then, as $t > d(n)+1$, we can conclude that $L/\Q$ is unramified at $p$. In particular, $L$ is unramified outside $\{ \ell, \infty\}$, so $L/\Q$ is a subextension of $K/\Q$. Moreover, by the choice of $p$, it is split in $L$. Thus, $\overline{\rho}_\ell(D_p)$ (which is isomorphic to a non-abelian homomorphic image of an extension of $\Z / n\Z$ by $\Z/t\Z$ such that $\Z/n\Z$ acts faithfully on $\Z/t\Z$) is contained in $H$.
Therefore, we can conclude that $\Gamma$ satisfies the conditions of Theorem \ref{kls22}, taking $d = d(n)+1$. Finally, as $\Gamma$ is contained in $\GO_n(\overline{\F}_\ell)$ our result follows from Corollary \ref{kls26}.
\end{proof}

In order to use the previous result to prove our result on the inverse Galois problem, we need to find a source of Galois representations satisfying the hypothesis in Theorem \ref{adz}. In our case, such a source will be certain automorphic representations of $\GL_n(\A_\Q)$. More precisely, let $\pi$ be a RAESDC (regular algebraic, essentially self-dual, cuspidal) automorphic representation of $\GL_n(\A_{\Q})$ unramified outside a finite set of primes $S$. Then, by the work of Caraiani, Chenevier, Clozel, Harris, Kottwitz, Shin, Taylor and several others; we have that, for any prime $\ell$, there exists a semi-simple Galois representation
\[
\rho_{\pi,\ell}: G_{\Q} \longrightarrow \GL_n(\overline{\Q}_\ell)
\]
unramified outside $S \cup \{ \ell \}$, compatible with the local Langlands correspondence. In particular, as $\pi$ is essentially self-dual, the image of $\rho_{\pi,\ell}$ is contained in $\GO_n(\overline{\Q}_\ell)$ or $\GSp_n(\overline{\Q}_\ell)$. See \cite[Section 2.1]{BLGGT14} and \cite[Section 3]{Ze19} for details and references.

\paragraph{Proof of Theorem \ref{impo}}
Let $\ell$ be an odd prime and $(p,t)$ be a couple of primes satisfying the hypothesis of Theorem \ref{adz}. The existence of such a couple of primes is guaranteed by Chevotarev's Density Theorem as in Lemma 6.3 of \cite{Ze19}.
Let $(E/\Q_p, \chi)$ be an admissible pair (as in Section \ref{se2}), with $\chi$ of $O$-type at $p$ of order $t$. As we remark in Section \ref{se1}, (by local Langlands correspondence) associated to $(E/\Q_p, \chi)$, there is a self-dual supercuspidal representation $\pi_{\mu \chi}$ of $\GL_n(\Q_p)$ of depth zero. 

Let $n=2m \geq 8$. Following Section 7 of \cite{Ze19}, we can construct a RAESDC automorphic representation $\pi = \otimes_v \pi_v $ of $\GL_n(\A_\Q)$ unramified outside $\{ p \}$, such that the local component $\pi_p$ of $\pi$ at the prime $p$ is $\pi_{\mu \chi}$. 
We remark that in \emph{loc. cit.}, a parity restriction on $m$ was imposed due to the fact that the split orthogonal group $\SO(m,m)$ has discrete series if and only if $m$ is even. However, the construction in \cite{Ze19} (which follows from the results of Arthur \cite{Art13} and Shin \cite{Shi12}, that also work for quasi-split orthogonal groups) can be extended to $m$ odd, by considering the quasi-split group $\SO(m+1,m-1)$ which has discrete series\footnote{This follows from Harish-Chandra's criterion for the existence of discrete series representations for the non-exceptional real Lie groups. See Chapter XII of \cite{Kna86} and its bibliographic notes.}.

Then, for all $\ell$, there exists a semi-simple Galois representation $\rho_{\pi,\ell}: G_{\Q} \rightarrow \GL_n(\overline{\Q}_\ell)$ unramified outside $\{p, \ell\}$ and maximally induced of $O$-type at $p$ of order $t$. In fact, as $\overline{\rho}_{\chi}$ is irreducible and orthogonal, $\overline{\rho}_{\pi,\ell}$ is irreducible and its image is contained in $\GO_n(\overline{\Q}_\ell)$. Thus, by Theorem \ref{adz}, we have that $\overline{\rho}_{\pi,\ell}$ contains $\Omega_n^\pm (\F_{\ell^k})$ for some positive integer $k$.  Consequently, the image of $\overline{\rho}^{\proj}_\ell$ (the projectivization of $\overline{\rho}_\ell$) is one of the following groups: 
\[
\POM^\pm_n(\F_{\ell^s}), \; \PSO^\pm_n(\F_{\ell^s}), \; \PO_n^\pm(\F_{\ell^s}) , \; \PGO^\pm_n(\F_{\ell^s})
\]
for some positive integer $s$, which implies that such group can be realized as a Galois group over $\Q$.

Finally, by Chebotarev's Density Theorem, there are infinitely many ways to choose the couple of primes $(p, t)$, and we can construct infinitely many RAESDC automorphic representations $\{ \pi_i \}_{i\in \N}$ of $\GL_n(\A_\Q)$ as above. Hence, there exists a family of Galois representations $\{ \rho_{\pi_i, \ell} \}_{i\in \N}$ such that the size of the image of $\rho^{\proj}_{\pi_i, \ell}$ is unbounded for running $i$, because we can choose $t$ as large as we please so that elements of larger and larger orders appear in the inertia images. This concludes our proof. 

\paragraph{Acknowledgments:}
I would like to thank Luis Dieulefait for many enlightening discussions and his encouragement to work on this project. I thank Luis Lomel\'i for useful discussions about automorphic representations. I also thank Andrew Odesky for useful comments on a previous version. Finally, I want to give special thanks to the referee, whose suggestions and comments have greatly improved the presentation of this paper.  In particular, his/her questions helped me remove a parity restriction imposed in a previous version of this manuscript.

The author was supported by CONICYT Proyecto FONDECYT Postdoctorado No. 3190474.

\end{document}